\newtheorem{thm}{Theorem}
\newtheorem{lem}{Lemma}
\newtheorem{rem}{Remark}
\newtheorem{dfn}{Definition}
\newcommand{\rank}{\mathrm{rank}}
\newcommand{\C}{\mathbb{C}}
\title{Spectral lower bounds for the orthogonal and projective ranks of a graph}
\author{Pawel Wocjan\thanks{\texttt{wocjan@cs.ucf.edu}, Department of Computer Science, University of Central Florida, USA}\quad Clive Elphick\thanks{\texttt{clive.elphick@gmail.com}, School of Mathematics, University of Birmingham, Birmingham, UK}}
\begin{document}

\maketitle

\begin{abstract}
The orthogonal rank of a graph $G=(V,E)$ is the smallest dimension $\xi$ such that there exist non-zero column vectors $x_v\in\C^\xi$ for $v\in V$ satisfying the orthogonality condition $x_v^\dagger x_w=0$ for all $vw\in E$.   We prove that many spectral lower bounds for the chromatic number, $\chi$, are also lower bounds for $\xi$. This result complements a previous result by the authors, in which they showed that spectral lower bounds for $\chi$ are also lower bounds for the quantum chromatic number $\chi_q$.  It is known that the quantum chromatic number and the orthogonal rank are incomparable.  

We conclude by proving an inertial  lower bound for the projective rank $\xi_f$, and conjecture that a stronger inertial lower bound for $\xi$ is also a lower bound for $\xi_f$.
\end{abstract}

\section{Introduction}

For any graph $G$, let $V$ denote the set of vertices where $|V| = n$, $E$ denote the set of edges where $|E| = m$, $A$ denote the adjacency matrix, $\chi(G)$ denote the chromatic number, $\omega(G)$ denote the clique number, $\alpha(G)$ denote the independence number, and $\overline{G}$ denote the complement of $G$. Let $\mu_1 \ge \mu_2  \ge \ldots \ge \mu_n$ denote the eigenvalues of $A$. Then, the inertia of $G$ is the ordered triple $(n^+, n^0, n^-)$, where $n^+, n^0$ and $n^-$ are the numbers of positive, zero and negative eigenvalues of $A$, including multiplicities. Note that $\rank(A) = n^+ + n^-$ and $\mathrm{null}(A) = n^0$. A graph is called non-singular if $n^0=0$.

Let $D$ be the diagonal matrix of vertex degrees, and let $L = D - A$ denote the Laplacian of $G$ and $Q = D  + A$ denote the signless Laplacian of $G$. The eigenvalues of $L$ are $\theta_1 \ge \ldots \ge \theta_n = 0$ and the eigenvalues of $Q$ are $\delta_1 \ge \ldots \ge \delta_n$.

Let $\chi_v(G)$ denote the vector chromatic number as defined by Karger \emph{et al} \cite{karger98} and $\chi_{sv}(G)$ denote the strict vector chromatic number.  They proved that $\chi_{sv}(G) = \theta(\overline{G})$, where $\theta$ is the Lov\'asz theta function \cite{lovasz79}.
In this context, we mention that the optimal choice of a weighted adjacency matrix for the Hoffman bound gives a bound that is equal to the strict vector chromatic number, that is,
$1+\max_W \{\mu_1(W \circ A) / |\mu_n(W\circ A)\} = \chi_{sv}(G)$, where the maximum is taken over all Hermitian matrices and $\circ$ denotes the Schur product. Let $\theta^+$ denote Szegedy's \cite{szegedy94} variant of $\theta$. Let $\chi_f(G)$ and $\chi_c(G)$ denote the fractional and circular chromatic numbers and let $\chi_q(G)$ and $\chi_q^{(r)}(G)$ denote the quantum and rank-$r$ quantum chromatic numbers, as defined by Cameron \emph{et al} \cite{cameron07}. 

\begin{dfn}[Orthogonal rank $\xi(G)$]
The orthogonal rank of $G$ is the smallest positive integer $\xi(G)$ such that there exists an orthogonal representation, that is a collection of non-zero column vectors $x_v\in\C^{\xi(G)}$ for $v\in V$ satisfying the orthogonality condition
\begin{equation}
x_v^\dagger x_w = 0
\end{equation}  
for all $vw\in E$. 

The normalized orthogonal rank of $G$ is the smallest positive integer $\xi'(G)$ such that there exists an orthogonal representation, with the added restriction that the entries of each vector must all have the same modulus.
\end{dfn}

Let  $\xi_f(G)$ denote the projective rank which was defined by Man\v{c}inska and Roberson \cite{mancinska16}, who  showed that $\omega(G) \le \xi_f(G) \le \xi(G)$. 
We use the definition of the $r$-fold orthogonal rank $\xi^{[r]}(G)$ due to Hogben et al. in \cite[Section 2.1.]{hogben17} and their results in \cite[Section 2.2.]{hogben17} to provide an equivalent and simpler definition of the projective rank.

\begin{dfn}[$r$-fold orthogonal rank $\xi^{[r]}(G)$ and projective orthogonal rank $\xi_f(G)$]
A $d/r$-representation of $G=(V,E)$ is a collection of rank-$r$ orthogonal projectors $P_v$ for $v\in V$ such that $P_v P_w = 0_d$ for all $vw\in E$. 

The $r$-fold orthogonal rank $\xi^{[r]}(G)$ is defined as follows:
\[
\xi^{[r]}(G) = \min\Big\{ d : G \mbox{ has a $d/r$-representation} \Big\} \,.
\]
The projective rank, $\xi_f(G)$, is defined as follows:
\[
\xi_f(G) = \lim_{r\rightarrow\infty} \frac{\xi^{[r]}(G)}{r}, \mbox{  and this limit exists.}
\]
The projective rank is also called the fractional orthogonal rank. 
\end{dfn}

Clearly, the vectors $x_v\in\C^{\xi(G)}$ of an orthogonal representation correspond to the rank-$1$ orthogonal projectors $P_v = x_v x_v^\dagger\in\C^{\xi(G)\times \xi(G)}$ of a $\xi(G)/1$-representation. It is also clear that $\xi^{[1]}(G)=\xi(G)$.

For $c\in\mathbb{N}$, we use the abbreviation $[c]=\{0,\ldots,c-1\}$.

\begin{dfn}[Vectorial chromatic number $\chi_{\mathrm{vect}}(G)$]\label{def:vectorial}
Paulsen and Todorov \cite{paulsen15} defined the vectorial chromatic number, $\chi_{\mathrm{vect}}(G)$, as follows. Let $G = (V, E)$ be a graph and $c\in\mathbb{N}$. A vectorial $c$-coloring of $G$ is a set of vectors $(x_{v,i} : v \in V, i \in [c])$ in a Hilbert space such that the following conditions are satisfied:

\begin{align}
\langle x_{v,i} , x_{w,j}\rangle \ge 0\,,                                                                                   & \quad v, w \in V ,\,   i,j\in [c] \label{eq:1} \\
\sum_{i\in[c]}  x_{v,i} = \sum_{i\in[c]} x_{w,i}\,,\quad  \Big\| \sum_{i\in[c]} x_{v,i} \Big\| = 1\,, & \quad v, w \in V \label{eq:2} \\
\langle x_{v,i} , x_{v,j} \rangle = 0\,,                                                                                     & \quad v \in V, \, i \not = j\in[c] \label{eq:3} \\
\langle x_{v,i} , x_{w,i} \rangle = 0\,,                                                                                    & \quad vw \in E, \, i\in [c]. \label{eq:4}
\end{align}

The least integer $c$ for which there exists a vectorial c-coloring will be denoted $\chi_{\mathrm{vect}}(G)$ and called the vectorial chromatic number of $G$.

\end{dfn}

Note that $\chi_{\mathrm{vect}}$ differs from $\chi_v$. Cubitt \emph{et al} \cite{cubitt14} (Corollary 16) proved the following (unexpected) equality between a chromatic number and a theta function:
\[
\chi_{\mathrm{vect}}(G) = \lceil \theta^+(\overline{G})\rceil,
\]
and provided an example of  a graph with $\chi_{\mathrm{vect}} < \chi_q$. Paulsen \emph{et al} \cite{paulsen16} (Theorem 7.3) proved that $\theta^+(\overline{G}) \le \xi_f(G) \le \xi(G)$, so $\chi_{\mathrm{vect}}(G) = \lceil \theta^+(\overline{G})\rceil \le \xi(G)$. 

\section{Hierarchies of graph parameters}

There are numerous graph parameters that lie between the clique number and the chromatic number. The following chains of inequalities summarise the relationships between many of them, and combine results in  Cameron \emph{et al} \cite{cameron07},  Man\v{c}inska and Roberson (\cite{mancinska162} and \cite{mancinska16}), Paulsen \emph{et al} \cite{paulsen16} and Elphick and Wocjan \cite{elphick17}. The chains are broken into two parts so the rightmost ends of (\ref{eq:chainA}) and leftmost ends of (\ref{eq:chainB}) coincide.

\begin{equation}\label{eq:chainA}
\begin{tikzcd}
                        &                        &                                                            &                                                     & \lceil \theta^+(\overline{G}) \rceil = \chi_{\mathrm{vect}}(G)  \\
\omega(G) \ar[r] & \chi_v(G) \ar[r] &\chi_{sv}(G) = \theta(\overline{G}) \ar[r]  & \theta^+(\overline{G}) \ar[r] \ar[ru] & \xi_f(G) 
\end{tikzcd}
\end{equation}

\bigskip
\begin{equation}\label{eq:chainB}
\begin{tikzcd}
\chi_{\mathrm{vect}}(G)   \ar[r] \ar[rd]         &  \xi(G)       \ar[to=2-3] \\ 
\xi_f(G ) \ar[to=2-2] \ar[to=1-2] \ar[to=3-2] &  \chi_q(G)  \ar[to=2-3]  & \chi_q^{(1)}(G) \ar[to=2-4] & \xi'(G) \ar[to=2-5] & \lceil \chi_c(G) \rceil = \chi(G) \\
                                                               &  \chi_f(G)     \ar[to=3-3] & \chi_c(G) \ar[to=2-5] \\
\end{tikzcd}
\end{equation}

As illustrated above, Ma\v{n}cinska and Roberson (\cite{mancinska162} and \cite{mancinska16}) demonstrated that $\xi$ and $\chi_q$ are incomparable, as are  $\chi_f$ and $\chi_q$; and also $\chi_f$ and $\xi$. They also proved that $\xi_f$ is a lower bound for $\xi, \chi_q$ and $\chi_f$. Cubitt \emph{et al} \cite{cubitt14} demonstrated that $\chi_{\mathrm{vect}}$ and $\xi_f$ are incomparable.  We can also demonstrate that $\chi_{\mathrm{vect}}$ and $\chi_f$ are incomparable as follows. It is straightforward that for $C_5$, $\chi_{vect} > \chi_f$. However if we consider the disjunctive product $C_5 * K_3$, then from \cite{cubitt14} we have $\chi_{\mathrm{vect}}(C_5 * K_3) \le 7$ but $\chi_f(C_5 * K_3) = 7.5$, because  $\chi_f$ is multiplicative for the disjunctive product. Note that $\xi, \xi', \chi_{\mathrm{vect}}, \chi_q, \chi_q^{(1)}$ are integers, $\chi_f$ is rational but it is unknown if $\xi_f$ is necessarily always rational. These hierarchies of parameters resolve a question raised by Wocjan and Elphick (see Section 2.4 of \cite{wocjan13}) of whether $\chi_v \le \xi'$.

Wocjan and Elphick \cite{wocjan18} proved that many spectral lower bounds for $\chi(G)$ are also lower bounds for $\chi_q(G)$. In this paper we prove that many spectral lower bounds for $\chi(G)$ are also lower bounds for $\xi(G)$. In Theorem~\ref{thm:inertial} we prove an inertial lower bound for $\xi(G)$ by strengthening a proof in \cite{elphick17}. In Theorem~\ref{thm:spectral} we prove several eigenvalue lower bounds for $\xi(G)$ by proving lower bounds for $\chi_{vect}(G)$. We \emph{conjecture} that all of these bounds are also lower bounds for $\xi_f(G)$, and make limited progress in this direction in Theorem~\ref{thm:weaker_inertial}. (See Remark~\ref{rem:progress}.)

\begin{thm}[Inertial lower bound for orthogonal rank]\label{thm:inertial}
Let $\xi(G)$ be the orthogonal rank of a graph $G$ with inertia $(n^+,n^0,n^-)$. Then
\[
1 + \max\left(\frac{n^+}{n^-}, \frac{n^-}{n^+} \right) \le \xi(G).
\]
\end{thm}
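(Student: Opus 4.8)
The plan is to reduce the statement to a single congruence identity for the adjacency matrix and then run a Sylvester-plus-subadditivity counting argument, mirroring the character-sum proof of the inertial bound for $\chi$ but with the coordinates of an orthogonal representation playing the role of the color characters.

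First I would fix an orthogonal representation $x_1,\dots,x_n\in\C^{\xi}$, where $\xi=\xi(G)$, normalized to unit vectors, and for each coordinate $k\in\{1,\dots,\xi\}$ introduce the diagonal matrix $D_k=\mathrm{diag}\big((x_v)_k : v\in V\big)$. Computing the $(v,w)$ entry gives
\[
\Big(\sum_{k=1}^{\xi} D_k A D_k^\dagger\Big)_{vw}=A_{vw}\sum_{k=1}^{\xi}(x_v)_k\,\overline{(x_w)_k}=A_{vw}\,(x_w^\dagger x_v),
\]
which vanishes on edges by the orthogonality condition, vanishes off edges because $A_{vw}=0$ there, and vanishes on the diagonal because $A_{vv}=0$. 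Hence $\sum_{k=1}^{\xi}D_kAD_k^\dagger=0$, the analogue of the vanishing character sum in the coloring proof. (Normalization is not essential here; only edge-orthogonality is used.)

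Next, suppose each $D_k$ is invertible. Then $M_k:=D_kAD_k^\dagger$ is a genuine congruence of $A$, so by Sylvester's law of inertia $n^+(M_k)=n^+$ and $n^-(M_k)=n^-$ for every $k$. Rewriting the identity as $M_1=-\sum_{k=2}^{\xi}M_k$, and using that the number of positive eigenvalues is subadditive, $n^+(X+Y)\le n^+(X)+n^+(Y)$, together with $n^+(-M)=n^-(M)$, I obtain
\[
n^+=n^+(M_1)=n^-\Big(\sum_{k=2}^{\xi}M_k\Big)\le\sum_{k=2}^{\xi}n^-(M_k)=(\xi-1)\,n^-,
\]
so $1+n^+/n^-\le\xi$; the symmetric computation with the roles of $n^+$ and $n^-$ exchanged gives $1+n^-/n^+\le\xi$, which is the claim.

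The hard part is exactly the invertibility assumption: in general a coordinate $(x_v)_k$ may vanish, so $D_k$ is singular and the congruence can only decrease the inertia, $n^\pm(D_kAD_k^\dagger)\le n^\pm$, which is too weak to recover $n^+$ globally. This is where I would strengthen the argument of \cite{elphick17}: apply a single Haar-generic unitary $U\in U(\xi)$ simultaneously to all representation vectors, $x_v\mapsto Ux_v$. This preserves unit norm and every edge-orthogonality, so it is again an orthogonal representation in $\C^{\xi}$, while $(Ux_v)_k=u_k^\dagger x_v$ (with $u_k$ the $k$-th row of $U$) is nonzero for all $v,k$ outside a finite union of proper subvarieties of $U(\xi)$; a generic $U$ therefore makes every $D_k$ invertible, and the previous paragraph applies. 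Finally I would dispose of the degenerate edgeless case separately, noting that as soon as $G$ has an edge the matrix $A$ is nonzero with zero trace, so $n^+,n^-\ge 1$ and the ratios are well defined.
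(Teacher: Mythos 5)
Your proposal is correct, and although it opens with exactly the same key identity as the paper --- the diagonal matrices $D_k$ built from the coordinates of the representation, with $\sum_{k=1}^{\xi} D_k A D_k^\dagger = 0$ following from edge-orthogonality --- it finishes by a genuinely different route. The paper arranges only for the \emph{first} coordinate of every $x_v$ to be nonzero (so that $D_1=I$), rewrites the identity as $\sum_{i=2}^{\xi} D_i^\dagger (B-C) D_i = C-B$ using the positive/negative-part decomposition $A=B-C$, compresses by the projector $P^-$ onto the negative eigenspace, and extracts $(\xi-1)n^+\ge n^-$ from a rank-monotonicity lemma for the semidefinite order; singular $D_i$ cause no trouble there because only ranks of positive semidefinite matrices are compared. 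You instead make \emph{every} $D_k$ invertible by rotating the whole representation with a generic unitary (a legitimate strengthening of the paper's parameter-counting remark, since each bad condition $(Ux_v)_k=0$ cuts out a proper, measure-zero subset of the connected group $U(\xi)$ and there are finitely many of them), and then apply Sylvester's law of inertia together with subadditivity of the positive and negative indices, $n^{\pm}(X+Y)\le n^{\pm}(X)+n^{\pm}(Y)$, to the relation $M_1=-\sum_{k\ge 2}M_k$. Your route is closer in spirit to the character-sum proof of the inertial bound for $\chi$ in \cite{elphick17} and avoids the rank lemma entirely, at the cost of the genericity step; the paper's PSD-decomposition machinery has the advantage of being reusable for the projective-rank bound in Theorem~\ref{thm:weaker_inertial}, where no analogous invertibility can be arranged. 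Your separate treatment of the edgeless case, where $n^+=n^-=0$ and the ratios are undefined, is also appropriate.
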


\begin{thm}[Eigenvalue lower bounds for vectorial chromatic number]\label{thm:spectral}
Let $\xi(G)$ be the orthogonal rank and $\chi_{vect}(G)$ be the vectorial chromatic number of a graph $G$. Then

\begin{equation}\label{bounds}
 1 + \max\left(\frac{\mu_1}{|\mu_n|} , \frac{2m}{2m - n\delta_n} , \frac{\mu_1}{\mu_1 - \delta_1 + \theta_1} \right) \le \chi_{vect}(G) \le \xi(G).
 \end{equation}
\end{thm}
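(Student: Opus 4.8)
The inequality $\chi_{\mathrm{vect}}(G)\le\xi(G)$ is already in hand: it is the cited bound $\theta^+(\overline{G})\le\xi_f(G)\le\xi(G)$ of Paulsen \emph{et al.} combined with $\chi_{\mathrm{vect}}(G)=\lceil\theta^+(\overline{G})\rceil$ (and $\xi$ integer). So the whole task is the three spectral lower bounds for $\chi_{\mathrm{vect}}(G)$. The plan is to transport the roots-of-unity ``twist'' of Wocjan and Elphick \cite{wocjan18}, which proved exactly these bounds for $\chi_q$, from projective measurements to the vectors of a vectorial colouring. Fix a vectorial $c$-colouring with $c=\chi_{\mathrm{vect}}(G)$, set $\omega=e^{2\pi i/c}$, and define the twisted vectors $z_v^{(j)}=\sum_{i\in[c]}\omega^{ij}x_{v,i}$ for $j\in[c]$. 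Condition \eqref{eq:3} makes each $z_v^{(j)}$ a unit vector, condition \eqref{eq:2} gives $z_v^{(0)}=z$ for every $v$, and the relation $\sum_{j}\omega^{j(i'-i)}=c\,[i=i']$ together with \eqref{eq:4} yields, for every edge $vw$, the crucial identity $\sum_{j=0}^{c-1}\langle z_v^{(j)},z_w^{(j)}\rangle=c\sum_i\langle x_{v,i},x_{w,i}\rangle=0$. Writing $G^{(j)}$ for the (positive semidefinite, unit-diagonal) Gram matrix of $\{z_v^{(j)}\}_{v}$, this says that $\sum_j G^{(j)}$ vanishes on all edges, while $G^{(0)}=J$.

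For the signless-Laplacian bound I would pair with $Q=D+A$. Since $\sum_j\langle A,G^{(j)}\rangle=\langle A,\sum_jG^{(j)}\rangle=0$ (the edges are annihilated) and $\langle D,G^{(j)}\rangle=2m$ for each $j$ by the unit diagonal, we get $\sum_j\langle Q,G^{(j)}\rangle=2mc$; isolating the $j=0$ term $\langle Q,J\rangle=4m$ and bounding the remaining $c-1$ terms below by $\langle Q,G^{(j)}\rangle\ge\delta_n\,\mathrm{tr}(G^{(j)})=n\delta_n$ gives $2mc\ge 4m+(c-1)n\delta_n$, which rearranges to $c\ge 1+2m/(2m-n\delta_n)$. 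The two eigenvalue bounds instead come from a sandwiched version: for a unit vector $f\in\mathbb{R}^n$ put $\vec w_j=\sum_v f_v\,(e_v\otimes z_v^{(j)})$, a unit vector in $\mathbb{R}^n\otimes\mathcal{H}$, and set $a_j=\langle\vec w_j,(A\otimes I)\vec w_j\rangle$. Edge-annihilation again gives $\sum_j a_j=0$ with $a_0=f^\top Af$. Taking $f$ to be the top eigenvector of $A$ and using $A\otimes I\succeq\mu_n I$ gives $a_j\ge\mu_n$ for $j\ge 1$, whence $\mu_1\le(c-1)|\mu_n|$, the Hoffman bound.

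The third bound needs the same master inequality but a different input. Writing $A=D-L$ and using $L\otimes I\preceq\theta_1 I$ yields $a_j\ge f^\top Df-\theta_1$, so from $\sum_j a_j=0$ I obtain, for every $f$ with positive denominator,
\[
c-1\ \ge\ \frac{f^\top Af}{f^\top(\theta_1 I-D)f}.
\]
The decisive step is to feed the \emph{top eigenvector $w$ of $Q$} (not of $A$) into this: then $w^\top Aw=\delta_1-\bar d$ and $w^\top(\theta_1 I-D)w=\theta_1-\bar d$ with $\bar d=w^\top Dw$, and since $w^\top Aw\le\mu_1$ we have $\bar d\ge\delta_1-\mu_1$. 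An elementary monotonicity estimate resting on $\delta_1\ge\theta_1$ then gives $\frac{\delta_1-\bar d}{\theta_1-\bar d}\ge\frac{\mu_1}{\mu_1-\delta_1+\theta_1}$, which is the stated bound.

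I expect two points to require care. First, unlike the quantum case of \cite{wocjan18}, a vectorial colouring supplies only vectors, not a genuine projective measurement, so the operators $\sum_i\omega^{ij}\hat x_{v,i}\hat x_{v,i}^\dagger$ are merely partial isometries rather than unitaries; the argument must therefore run through the Gram matrices $G^{(j)}$ and the fixed vector $z$ instead of through unitary conjugation, and one must verify that unit norm, $G^{(0)}=J$, and edge-annihilation all survive this weakening --- they do, precisely because of conditions \eqref{eq:2}--\eqref{eq:4}. Second, and this is the genuine obstacle, the third bound does \emph{not} emerge from the obvious choice $f=$ top eigenvector of $A$, which only produces the weaker $1+\mu_1/(\theta_1-\bar d_f)$; it is the non-obvious substitution $f=$ top eigenvector of $Q$, together with the inequality powered by $\delta_1\ge\theta_1$, that pins down the denominator $\mu_1-\delta_1+\theta_1$.
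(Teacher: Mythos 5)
Your proof is correct, and while it runs on the same roots-of-unity engine as the paper's, its execution is genuinely different in two places. The paper first upgrades the vectorial colouring to a bona fide projective measurement: conditions \eqref{eq:2} and \eqref{eq:3} make the $x_{v,i}$ (for fixed $v$) mutually orthogonal with common sum $y$, so one takes $P_{v,i}$ to be the projector onto $x_{v,i}$, completes to a resolution of the identity, forms the block projectors $P_i=\sum_v e_ve_v^\dagger\otimes P_{v,i}$, and invokes \cite[Lemma 1]{wocjan18} to replace the pinching $\sum_i P_i X P_i$ by a twirling $\tfrac{1}{c}\sum_\ell U^\ell X (U^\dagger)^\ell$ with $U$ unitary; the three bounds are then read off from the compressed operator identities. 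Your twisted vectors $z_v^{(j)}=\sum_i\omega^{ij}x_{v,i}$ are precisely $U^j(e_v\otimes y)$ in that construction, but you bypass the completion step and the unitary entirely and work only with the Gram matrices $G^{(j)}$ --- which suffices, since unit diagonal, $G^{(0)}=J$, positive semidefiniteness and edge-annihilation are all you ever use (so your worry about the twist operators being mere partial isometries is moot: the paper's completion trick makes them unitary, and your route does not need them at all). The more substantive divergence is the Kolotilina bound: the paper proves $\lambda_{\max}(E-A)\ge\lambda_{\max}(E+\tfrac{1}{c-1}A)$ for an arbitrary diagonal $E$ and then cites \cite[Corollary 5]{unified_spectral_bounds} to massage this into the stated form with $E=D$, whereas you feed the top eigenvector $w$ of $Q$ into the Rayleigh-quotient inequality $c-1\ge f^\top Af/(\theta_1-f^\top Df)$ and finish by the monotonicity of $t\mapsto(\delta_1-t)/(\theta_1-t)$ (valid because $\delta_1\ge\theta_1$) together with $w^\top Dw\ge\delta_1-\mu_1$. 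This is self-contained and avoids the external corollary; the paper's version buys uniformity (Hoffman and Kolotilina, and their generalizations, from one identity). Two small points you should state explicitly: $\theta_1-w^\top Dw>0$ (true since $\theta_1\ge\Delta+1$ for a graph with an edge) and $2m-n\delta_n>0$ in the Lima step (true since $\mathrm{tr}(Q)=2m$ forces $\delta_n<2m/n$ for non-empty graphs). Your treatment of the upper bound $\chi_{\mathrm{vect}}(G)\le\xi(G)$ is exactly the paper's.
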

These bounds, reading from left to right, have been proved to be lower bounds for $\chi(G)$ by  Hoffman \cite{hoffman70}, Lima \emph{et al} \cite{lima11} and Kolotilina \cite{kolotilina11}.

\begin{thm}[Inertial lower bound for projective rank]\label{thm:weaker_inertial}
Let $\xi_f(G)$ be the projective rank of a graph $G$ with inertia $(n^+,n^0,n^-)$.
Then,
\[
1 + \max\left( \frac{n^+}{n^- + n^0}, \frac{n^-}{n^+ + n^0} \right)  \le \xi_f(G)\,.
\]
\end{thm}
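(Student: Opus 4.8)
The plan is to prove the inequality for an arbitrary $d/r$-representation and then pass to the limit defining $\xi_f$. Fix a $d/r$-representation $\{P_v\}_{v\in V}$, so each $P_v$ is a rank-$r$ orthogonal projector on $\C^d$ and $P_vP_w=0$ whenever $vw\in E$; the latter says precisely that $\mathrm{range}(P_v)\perp\mathrm{range}(P_w)$ for each edge $vw$. Working in $\C^n\otimes\C^d$, I would consider the Hermitian matrix $M=A\otimes I_d$. Its spectrum is that of $A$ with every eigenvalue repeated $d$ times, so its inertia is $(dn^+,dn^0,dn^-)$. The geometric idea is to turn the edge-orthogonality of the projectors into a large subspace on which the form of $M$ vanishes identically, and then to play this off against the inertia of $M$.

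Concretely, I would define
\[
\mathcal{W} = \mathrm{span}\big\{\, e_v\otimes u \;:\; v\in V,\ u\in\mathrm{range}(P_v)\,\big\}\subseteq \C^n\otimes\C^d .
\]
Because the $e_v$ are orthogonal and each $\mathrm{range}(P_v)$ has dimension $r$, this is a direct sum over $v$ and $\dim\mathcal{W}=nr$. The one computation that matters is that, on spanning vectors, $(e_v\otimes u)^\dagger M\,(e_{v'}\otimes u') = A_{vv'}\langle u,u'\rangle$, and this vanishes in every case: if $v=v'$ or $vv'\notin E$ then $A_{vv'}=0$, while if $vv'\in E$ then $A_{vv'}=1$ but $\langle u,u'\rangle=0$ since the ranges are orthogonal. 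Hence the full sesquilinear form of $M$ vanishes on $\mathcal{W}$, i.e.\ $\mathcal{W}$ is a totally isotropic subspace for $M$. I would then invoke the standard fact that a Hermitian matrix with inertia $(N^+,N^0,N^-)$ has no totally isotropic subspace of dimension exceeding $N^0+\min(N^+,N^-)$ (reduce to $\mathrm{diag}(I,0,-I)$ by congruence, discard the radical of dimension $N^0$, and observe that projecting onto the positive, resp.\ negative, block is injective on an isotropic subspace, so its dimension is at most $\min(N^+,N^-)$). Applied to $M$, whose inertia is $(dn^+,dn^0,dn^-)$, this yields
\[
nr = \dim\mathcal{W} \le d\big(n^0+\min(n^+,n^-)\big),
\qquad\text{so}\qquad
\frac{d}{r} \ge \frac{n}{\,n^0+\min(n^+,n^-)\,}.
\]
Using $n=n^++n^0+n^-$, the right-hand side equals $1+\max\big(n^+/(n^0+n^-),\,n^-/(n^0+n^+)\big)$. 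Since this holds for every $d/r$-representation, it holds with $d=\xi^{[r]}(G)$ for each $r$, and as the bound is independent of $r$, taking $r\to\infty$ in $\xi_f(G)=\lim_r \xi^{[r]}(G)/r$ gives the claim.

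I expect the only genuinely non-routine ingredient to be the maximal-isotropic-subspace estimate used above; the identification of the correct ambient operator $A\otimes I_d$ and the subspace $\mathcal{W}$ is the main creative step, and the remaining verifications are immediate. It is worth recording that the same argument at $r=1$ already produces an inertial lower bound for $\xi(G)$, but a weaker one than Theorem~\ref{thm:inertial}, because the $n^0$ term persists in the denominator; the strengthening in Theorem~\ref{thm:inertial} is exactly what removes $n^0$. The value of the present statement is that this weaker, isotropy-based bound is robust enough to survive the limit and therefore lower-bounds the smaller parameter $\xi_f(G)\le\xi(G)$.
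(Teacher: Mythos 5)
Your proof is correct, but it reaches the bound by a different route than the paper. Both arguments start from the same geometric fact: the rank-$nr$ block subspace $\mathcal{W}=\bigoplus_v e_v\otimes\mathrm{range}(P_v)$ (which is exactly the range of the paper's block projector $P=\sum_v e_ve_v^\dagger\otimes P_v$) is totally isotropic for $A\otimes I_d$; this is the paper's Lemma on $P(A\otimes I_d)P=0_n\otimes 0_d$. Where you diverge is in converting isotropy into a dimension bound. You invoke the classical Witt-index-type estimate that a Hermitian form of inertia $(N^+,N^0,N^-)$ admits no totally isotropic subspace of dimension greater than $N^0+\min(N^+,N^-)$, which in one stroke gives $nr\le d\bigl(n^0+\min(n^+,n^-)\bigr)$ and hence both branches of the maximum simultaneously. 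The paper instead decomposes $A=B-C$ into its positive and negative parts, deduces $P(B\otimes I_d)P=P(C\otimes I_d)P=\tfrac12P(|A|\otimes I_d)P$, bounds the rank of the left side above by $n^+d$ via submultiplicativity of rank, and bounds the right side below by $nr-n^0d$ using a separate lemma on the rank of compressions $PXP$ of positive semidefinite $X$; the two branches of the maximum then require two parallel runs of the argument. Your version is more self-contained and arguably cleaner, provided you regard the maximal-isotropic-subspace bound as standard (your sketched proof of it, via quotienting out the radical and noting that projection onto the definite blocks is injective on an isotropic subspace, is sound); the paper's version is more elementary in that it only uses rank inequalities, and its intermediate lemmas are reused elsewhere in the paper. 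Your closing observation that the $r=1$ case recovers only the weaker inertial bound for $\xi(G)$, with the $n^0$ term surviving, matches the paper's remark that the two inertial bounds coincide exactly for non-singular graphs.
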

In particular, when the graph $G$ is non-singular the lower bounds in Theorems~\ref{thm:inertial} and ~\ref{thm:weaker_inertial} coincide.

\begin{rem}
All results also apply to weighted adjacency matrices $W \circ A$, where $W$ is an arbitrary Hermitian matrix and $\circ$ denotes the Hadamard product (also called Schur product).
\end{rem}

\begin{rem}\label{rem:progress}
The authors have subsequently strengthened Theorem~\ref{thm:spectral} by proving that these three bounds are also lower bounds for the vector chromatic number $\chi_v(G)$ \cite{wocjan182} provided that the weighted adjacency matrices are restricted to have only nonnegative entries. As a result the bounds in Theorem~\ref{thm:spectral} are lower bounds for $\chi_f(G)$ with such weighted adjacency matrices. Theorem~\ref{thm:spectral} in the present paper relies on techniques that are very different from those in \cite{wocjan182}. 
\end{rem}

\section{Proof of the inertial lower bound on the orthogonal rank $\xi(G)$}\label{sec:orthogonal_rank}

Let $f_1,\ldots, f_n\in\C^n$ denote the eigenvectors of unit length corresponding to the eigenvalues $\mu_1 \ge \ldots \ge \mu_n$. Then, $A = B - C$, where

\begin{equation}
B = \sum_{i=1}^{n^+} \mu_i f_i f_i^\dagger\, \quad\mbox{and} \quad
C = \sum_{i=n-n^- +1}^{n} (- \mu_i) f_i f_i^\dagger\,.
\end{equation}

Note that $B$ and $C$ are positive semidefinite and that $\mathrm{rank}(B) = n^+$ and $\mathrm{rank}(C) = n^-$. Let
\[
P^+ = \sum_{i=1}^{n+} f_i f_i^\dagger\,,\quad
P^- = \sum_{i=n-n^- +1}^{n}  f_i f_i^\dagger
\]
denote the orthogonal projectors onto the subspaces spanned by the eigenvectors corresponding to the positive and negative eigenvalues respectively.
Note that $B = P^+ A P^+$ and $C = -P^- A P^-$.

\begin{lem}
\label{lem:rank}
Let $X$ and $Y \in\mathbb{C}^{n\times n}$ be two positive semidefinite matrices satisfying $X \succeq Y$, that is, their difference $X-Y$ is positive semidefinite.  Then, 
\begin{equation}
\mathrm{rank}(X)\ge \mathrm{rank}(Y)\,. 
\end{equation}
\end{lem}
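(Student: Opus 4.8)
The plan is to convert the rank inequality into a statement about kernels and exploit the defining feature of positive semidefinite matrices that their kernel coincides with their isotropic cone. Since $X$ and $Y$ are Hermitian (being positive semidefinite), we have $\rank(X) = n - \dim\ker(X)$ and likewise for $Y$, so the claim $\rank(X) \ge \rank(Y)$ is equivalent to $\dim\ker(X) \le \dim\ker(Y)$. I would obtain the latter by proving the containment $\ker(X) \subseteq \ker(Y)$, which immediately forces the dimension inequality. Note the direction: the \emph{larger} matrix $X$ has the \emph{smaller} kernel.

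The key step is a sandwiching argument applied to an arbitrary $v \in \ker(X)$. Because $Xv = 0$ we trivially get $v^\dagger X v = 0$. The hypothesis $X \succeq Y$ says that $X - Y$ is positive semidefinite, so $v^\dagger X v \ge v^\dagger Y v$, while positive semidefiniteness of $Y$ gives $v^\dagger Y v \ge 0$. Chaining these, $0 = v^\dagger X v \ge v^\dagger Y v \ge 0$, which forces $v^\dagger Y v = 0$. It then remains to pass from the vanishing of this quadratic form back to the kernel of $Y$, i.e.\ to conclude $Yv = 0$ and hence $v \in \ker(Y)$.

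That last implication, $v^\dagger Y v = 0 \Rightarrow Yv = 0$, is the one place where positive semidefiniteness of $Y$ is genuinely used, and it is the only point that needs care. I would justify it by factoring $Y = R^\dagger R$ (for instance $R = Y^{1/2}$, which exists precisely because $Y \succeq 0$), so that $v^\dagger Y v = \|Rv\|^2$; its vanishing gives $Rv = 0$ and therefore $Yv = R^\dagger R v = 0$. I do not anticipate any serious obstacle here, since the argument is elementary once both positive semidefiniteness hypotheses are invoked correctly. An alternative and equally short route would appeal to Weyl's monotonicity theorem: $X \succeq Y$ implies $\mu_i(X) \ge \mu_i(Y)$ for every $i$, so each positive eigenvalue of $Y$ is matched by a positive eigenvalue of $X$, whence $X$ has at least as many positive eigenvalues as $Y$; for positive semidefinite matrices this count is exactly the rank. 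I prefer the kernel-containment argument as the cleaner of the two.
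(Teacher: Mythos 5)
Your proof is correct and is essentially the paper's argument read in the contrapositive: the paper assumes $\rank(X)<\rank(Y)$, picks a nonzero $v$ in the range of $Y$ orthogonal to the range of $X$, and evaluates $v^\dagger (X-Y)v = -v^\dagger Y v < 0$ to get a contradiction, which is the same quadratic-form test you apply to a vector of $\ker(X)$ (using, as you do, that $v^\dagger Y v = 0$ forces $Yv=0$ for positive semidefinite $Y$). No changes needed.
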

\begin{proof}
Assume to the contrary that $\mathrm{rank}(X) < \mathrm{rank}(Y)$. 
Then, there exists a non-trival vector $v$ in the range of $Y$ that is orthogonal to the range of $X$. Consquently,
\[
v^\dagger (X-Y) v = - v^\dagger Y v < 0
 \] 
 contradicting that $X-Y$ is positive semidefinite.
\end{proof}

\begin{rem}
Let $x_v=(x_v^1,\ldots,x_v^\xi)^T\in\C^\xi$ for $v\in V$ be an orthogonal representation. Note that we may assume that the first entries of these vectors are all equal to $1$, that is,
\[
x_v^1 = 1
\]
for all $v\in V$ for the following reason. If we apply any unitary transformation $U\in\mathbb{C}^{\xi\times\xi}$ to $x_v$ we obtain an equivalent orthogonal representation $y_v = U x_v$.  Clearly, there must exist a unitary matrix $U$ such that the resulting orthogonal representation $y_v = (y_v^1,\ldots,y_v^\xi)^T$ satisfies the condition $y_v^1 \neq 0$ for all $v\in V$
due to a simple parameter counting argument. We can now rescale each vector to additionally achieve $y_v^1 = 1$.  
\end{rem}

We now have all the tools to prove Theorem~\ref{thm:inertial}.

\begin{proof}
Let $x_v=(x_v^1,\ldots,x_v^\xi)^T$ for $v\in V$ be an orthogonal representation satisfying the additional condition $x_v^1=1$ as in the remark above.
We define $\xi$ diagonal matrices 
\[
D_i = \mathrm{diag}(x_v^i : v\in V) \in\mathbb{C}^{n\times n}
\] 
for $i=1,\ldots, \xi$. Due to this construction, we have
\[
\sum_{i=1}^\xi D_i^\dagger A D_i = (s_{vw})\,,\mbox{ with }
s_{vw} = a_{vw} \cdot {x_v^\dagger x_w}  \mbox{ for } v,w\in V\,.
\]
We see that this sum is the zero matrix because all its entries $s_{vw}$ are zero either due to the orthogonality condition of the orthogonal representation $x_v^\dagger x_w=0$ for $vw\in E$ or due to  $a_{vw}=0$ for $vw\not\in E$.
Observe that $D_1=I$ due to the above remark. We obtain 
\begin{equation}\label{eq:conversion}
\sum_{i=2}^\xi D_i^\dagger A D_i = -A.
\end{equation}

Equation~(\ref{eq:conversion}) can be rewritten as
\[
\sum_{i=2}^\xi D_i^\dagger (B - C ) D_i = C - B.
\]
Multiplying both sides by $P^-$ from left and right yields:
\[
P^- \left( \sum_{i=2}^\xi D_i^\dagger (B - C ) D_i \right) P^- = C.
\]
Using that 
\[
P^- \left( \sum_{i=2}^\xi D_i^\dagger C D_i \right) P^-
\]
is positive semidefinite, it follows that
\[
P^- \left( \sum_{i=2}^\xi D_i^\dagger B D_i \right) P^- \succeq C.
\]
Then using that the rank of a sum is less than or equal to the sum of the ranks of the summands, that the rank of a product is less than or equal to the minimum of the ranks of the factors, and Lemma~\ref{lem:rank}, we have that $(\xi-1) n^+ \ge n^-$.
Similarly, $(\xi - 1)n^- \ge n^+$ is obtained by multiplying $(\ref{eq:conversion})$ by $-1$ and repeating the arguments (but multiplying by $P^+$ instead of $P^-$ from the left and right).
\end{proof}

\section{Proof of eigenvalue lower bounds on the orthogonal rank $\xi(G)$}\label{sec:orthogonal_eigenvalue}

We now present a generalization of \cite[Theorem 1]{wocjan18}.

\begin{thm}
Assume that there exists a vectorial $c$-coloring of $G$. Then, there exists a collection of orthogonal projectors $(P_{v,i}\in\C^{d\times d}, v\in V, i\in [c])$ and a unit (column) vector $y\in\C^d$ such that the block-diagonal orthogonal projectors
\[
P_i = \sum_{v\in V} e_v e_v^\dagger \otimes P_{v,i} \in \C^{n\times n} \otimes \C^{d\times d}
\]
satisfy the following three conditions:
\begin{align}
\sum_{i\in [c]} P_i &= I_n \otimes I_d, \label{eq:prop1} \\
\Big( I_n \otimes y y^\dagger \Big) \sum_{i\in [c]} P_i (A\otimes I_d) P_i \Big( I_n \otimes y y^\dagger \Big) &= 0_n \otimes 0_d, \label{eq:prop2} \\
\Big( I_n \otimes y y^\dagger \Big) \sum_{i\in [c]} P_i (E\otimes I_d) P_i \Big( I_n \otimes y y^\dagger \Big) &= E \otimes y y^\dagger, \label{eq:prop3}
\end{align}
where $E\in\C^{d\times d}$ is an arbitrary diagonal matrix.
\end{thm}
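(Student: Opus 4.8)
The plan is to read off all the required data directly from a given vectorial $c$-coloring $(x_{v,i})$. First I would set $y := \sum_{i\in[c]} x_{v,i}$; by condition~(\ref{eq:2}) this vector is independent of $v$ and has unit norm, so it is a legitimate choice for the unit vector $y$. Since only finitely many vectors $x_{v,i}$ occur, I may pass to the finite-dimensional subspace $W = \mathrm{span}\{x_{v,i} : v\in V,\, i\in[c]\}$ they span, set $d = \dim W$, and identify $W \cong \C^d$; note $y\in W$. The whole theorem then reduces to producing, for each fixed $v$, a projective measurement $\{P_{v,i}\}_{i\in[c]}$ on $\C^d$ (mutually orthogonal projectors summing to $I_d$) with the single decisive property
\[
P_{v,i}\, y = x_{v,i}, \qquad i\in[c].
\]

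To build this measurement I would use condition~(\ref{eq:3}): for fixed $v$ the nonzero $x_{v,i}$ are pairwise orthogonal, so normalizing gives an orthonormal family $u_{v,i} = x_{v,i}/\|x_{v,i}\|$ spanning a subspace $S_v$, and $y = \sum_i x_{v,i}\in S_v$. Taking $P_{v,i} = u_{v,i}u_{v,i}^\dagger$ on the nonzero indices, absorbing the projector onto $S_v^\perp$ into one distinguished block, and setting $P_{v,i}=0$ whenever $x_{v,i}=0$, yields a genuine resolution of the identity; because $y\perp S_v^\perp$ the extra block contributes nothing to $P_{v,i}y$, and $u_{v,i}^\dagger y = \|x_{v,i}\|$ gives exactly $P_{v,i}y = x_{v,i}$. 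Property~(\ref{eq:prop1}) is then immediate from $\sum_i P_{v,i}=I_d$. For the remaining two properties I would record the algebraic identity, valid for any $M\in\C^{n\times n}$,
\[
\sum_{i\in[c]} P_i (M\otimes I_d) P_i = \sum_{v,w\in V} M_{vw}\, e_v e_w^\dagger \otimes \sum_{i\in[c]} P_{v,i}P_{w,i},
\]
after which conjugation by $I_n\otimes yy^\dagger$ replaces each inner block $\sum_i P_{v,i}P_{w,i}$ by the scalar $\big(\sum_i y^\dagger P_{v,i}P_{w,i}\,y\big)\,yy^\dagger$.

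With this identity the two remaining properties fall out. Using $P_{v,i}y=x_{v,i}$ and Hermiticity of the projectors, $y^\dagger P_{v,i}P_{w,i}\,y = \langle x_{v,i}, x_{w,i}\rangle$, so for $M=A$ the surviving coefficients are $\sum_i \langle x_{v,i},x_{w,i}\rangle$ over edges $vw$, which vanish by the edge-orthogonality condition~(\ref{eq:4}); this gives~(\ref{eq:prop2}). For $M=E$ diagonal only the terms $v=w$ survive, and there $\sum_i P_{v,i}^2 = \sum_i P_{v,i} = I_d$, so $\sum_i P_i(E\otimes I_d)P_i = E\otimes I_d$ and conjugating by $I_n\otimes yy^\dagger$ returns $E\otimes yy^\dagger$, which is~(\ref{eq:prop3}). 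The substantive step — and the only place requiring care — is the construction of the measurement $\{P_{v,i}\}$ realizing $P_{v,i}y = x_{v,i}$ while remaining a complete orthogonal resolution of the identity; the bookkeeping of the complement $S_v^\perp$ and of the vanishing vectors $x_{v,i}=0$ is exactly where a naive choice of rank-one projectors would fail to sum to $I_d$. It is worth noting that conditions~(\ref{eq:2}),~(\ref{eq:3}) and~(\ref{eq:4}) are precisely what the argument consumes, whereas the nonnegativity condition~(\ref{eq:1}) plays no role in this construction.
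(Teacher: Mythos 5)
Your proposal is correct and follows essentially the same route as the paper: set $y=\sum_i x_{v,i}$, take $P_{v,i}$ to be the projector onto the span of $x_{v,i}$ (padding one block with the orthogonal complement so the family resolves the identity), verify $P_{v,i}y=x_{v,i}$, and then obtain (\ref{eq:prop2}) and (\ref{eq:prop3}) by expanding $\sum_i P_i(M\otimes I_d)P_i$ blockwise and conjugating by $I_n\otimes yy^\dagger$, invoking conditions (\ref{eq:3}) and (\ref{eq:4}). Your treatment is if anything slightly more careful than the paper's (explicitly handling vanishing $x_{v,i}$ and restricting to a finite-dimensional subspace), and you correctly observe, as the paper does, that condition (\ref{eq:1}) is never used.
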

\begin{proof}
We now prove condition (\ref{eq:prop1}).
Let $(x_{v,i} : v \in V, i \in [c])$ be a vectorial $c$-coloring of $G$. Conditions (\ref{eq:2}) and (\ref{eq:3}) in Definition~\ref{def:vectorial} imply that there exist orthogonal projectors $P_{v,i}\in\C^{d\times d}$ and a unit (column) vector $y\in\C^d$ such that the $P_{v,i}$ form a resolution of the identity $I_d$
\begin{equation}\label{eq:resolution_block}
\sum_{i\in[c]} P_{v,i} = I_d 
\end{equation}
for all $v\in V$ and
\[
x_{v,i} = P_{v,i} y
\]
for all $v\in V$ and $i\in[c]$. The unit vector $y$ is simply equal to the sum $\sum_{i\in [c]} x_{v,i}$. For $i\in [c]$, $P_{v,i}$ is equal to the orthogonal projector onto the subspace spanned by $x_{v,i}$. Note that if their sum $\sum_{i\in [c]} P_{v,i}$ is not equal to the identity $I_d$, then we can add the projector onto the missing orthogonal complement to, say, $P_{v,0}$. 

We now prove condition (\ref{eq:prop2}).
Let $e_v$ denote the standard basis (column) vectors of $\C^n$ corresponding to the vertices $v\in V$ so that $A=\sum_{v,w\in V} a_{vw} e_v e_w^\dagger$.
For $v\in V, i \in [c]$, the block-diagonal projectors $P_i\in\C^{n\times n}\otimes\C^{d\times d}$ form a resolution of the identity $I_n\otimes I_d$, which follows by applying condition~(\ref{eq:resolution_block}) to each block of these projectors. For $v,w\in V$, we use $v\sim w$ to denote that these vertices are connected. When used in a summation symbol, it means that the sum is taken over all pairs of connected vertices. To abbreviate, we define the projector $\Upsilon=y y^\dagger$.
\begin{align}
& 
\Big( I_n \otimes \Upsilon \Big)
\sum_{i\in [c]} P_i \Big( A\otimes I_d \Big) P_i 
\Big( I_n \otimes \Upsilon \Big) \\
&= 
\Big( I_n \otimes \Upsilon \Big)
\sum_{i\in [c]} \left( \sum_{v\in V} e_v e_v^\dagger \otimes P_{v,i} \right)
\Big( A \otimes I_d \Big)
\left( \sum_{w\in V} e_w e_w^\dagger \otimes P_{w,i} \right)
\Big( I_n \otimes \Upsilon \Big) \\
&=
\Big( I_n \otimes \Upsilon \Big)
\sum_{i\in [c]} \left( \sum_{v,w\in V} a_{v,w} \cdot e_v e_w^\dagger \otimes P_{v,i} P_{w,i} \right)
\Big( I_n \otimes \Upsilon \Big) \\
&=
\Big( I_n \otimes \Upsilon \Big)
\sum_{i\in [c]} \left( \sum_{v\sim w} e_v e_w^\dagger \otimes P_{v,i} P_{w,i} \right)
\Big( I_n \otimes \Upsilon \Big) \\
&=
\sum_{i\in [c]} \left( \sum_{v\sim w} e_v e_w^\dagger \otimes \Upsilon P_{v,i} P_{w,i} \Upsilon \right) \\
&=
\sum_{i\in [c]} \left( \sum_{v\sim w} x_{v,i}^\dagger x_{w,i} \cdot e_v e_w^\dagger \otimes  \Upsilon \right) \\
&=
0_n \otimes 0_d,
\end{align}
where we used
\begin{equation}
\Upsilon P_{v,i} P_{w,i} \Upsilon  = y (y^\dagger P_{v,i}) (P_{w,i} y) y^\dagger =  y (x_{v,i}^\dagger x_{w,i}) y^\dagger = x_{v,i}^\dagger x_{w,i} \cdot \Upsilon
\end{equation}
and (\ref{eq:4}), which states that $x_{v,i}^\dagger x_{w,i}=0$ for all $i\in[c]$ and all $v\sim w$.

Finally, condition~(\ref{eq:prop3}) is proved similarly.
\end{proof}

\begin{lem}
There exists a unitary matrix $U\in\C^{n\times n} \otimes \C^{d \times d}$ such that
\begin{align}
\sum_{\ell\in[c]} P U^\ell (A\otimes I_d) (U^\dagger)^\ell P &= 0_n \otimes 0_d \label{eq:twirling A}\\
\sum_{\ell\in[c]} P U^\ell (E\otimes I_d) (U^\dagger)^\ell P &= c \, E \otimes y y^\dagger\, \label{eq:twirling diag}
\end{align}
for any diagonal matrix $E\in\C^{n\times n}$. 
\end{lem}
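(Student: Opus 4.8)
The plan is to realize the ``pinching'' map $X\mapsto \sum_{i\in[c]}P_i X P_i$ from the previous theorem as a group average (a \emph{twirl}) over the cyclic group $\mathbb{Z}_c$, implemented by the powers of a single unitary assembled from the projectors $P_i$. The reason this works is that the $P_i$ are mutually orthogonal: for each fixed $v$ the rank-one projectors $P_{v,i}$ project onto the pairwise orthogonal vectors $x_{v,i}$ of condition~(\ref{eq:3}) and sum to $I_d$ by~(\ref{eq:resolution_block}), so $P_{v,i}P_{v,j}=0$ for $i\neq j$; passing to the block-diagonal projectors yields $P_iP_j=\delta_{ij}P_i$ together with $\sum_{i\in[c]}P_i=I_n\otimes I_d$.

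First I would fix a primitive $c$-th root of unity $\omega=e^{2\pi\mathrm{i}/c}$ and set
\[
U=\sum_{i\in[c]}\omega^i P_i .
\]
Because the $P_i$ are mutually orthogonal projectors resolving the identity, $U$ is unitary, since $UU^\dagger=\sum_{i\in[c]}|\omega^i|^2 P_i=\sum_{i\in[c]}P_i=I_n\otimes I_d$, and its powers collapse to $U^\ell=\sum_{i\in[c]}\omega^{i\ell}P_i$ for every $\ell$.

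Next I would compute the twirl of an arbitrary $X\in\C^{n\times n}\otimes\C^{d\times d}$. Expanding and collecting the phases,
\[
\sum_{\ell\in[c]}U^\ell X (U^\dagger)^\ell
=\sum_{i,j\in[c]}\Big(\sum_{\ell\in[c]}\omega^{(i-j)\ell}\Big)P_i X P_j
=c\sum_{i\in[c]}P_i X P_i ,
\]
where the inner sum is the standard geometric sum of roots of unity, equal to $c$ when $i\equiv j \pmod c$ and $0$ otherwise. Conjugating both sides by $P=I_n\otimes yy^\dagger$ and taking $X=A\otimes I_d$ gives
\[
\sum_{\ell\in[c]}P U^\ell(A\otimes I_d)(U^\dagger)^\ell P
= c\,P\Big(\sum_{i\in[c]}P_i(A\otimes I_d)P_i\Big)P = 0_n\otimes 0_d
\]
by~(\ref{eq:prop2}), while taking $X=E\otimes I_d$ for a diagonal $E$ yields the value $c\,E\otimes yy^\dagger$ by~(\ref{eq:prop3}); these are exactly~(\ref{eq:twirling A}) and~(\ref{eq:twirling diag}).

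I do not expect a serious obstacle here: once $U$ is defined, this is the familiar fact that averaging a conjugation action over $\mathbb{Z}_c$ projects onto the diagonal blocks, and that projection coincides with the pinching $\sum_{i}P_i(\cdot)P_i$. The only point needing genuine care is verifying that $U$ is honestly unitary, that is, that the $P_i$ are \emph{pairwise orthogonal} and not merely a positive resolution of the identity. This follows from the explicit construction of the $P_{v,i}$ as orthogonal projectors onto the mutually orthogonal vectors $x_{v,i}$ (with the leftover complement absorbed into $P_{v,0}$); equivalently, from $\sum_{i}P_i=I$ one gets $\sum_{i\neq j}P_iP_j=0$, and since $\mathrm{tr}(P_iP_j)=\|P_jP_i\|_F^2\ge 0$, each cross term vanishes, forcing $P_iP_j=0$ for $i\neq j$.
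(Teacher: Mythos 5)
Your proof is correct and follows essentially the same route as the paper: the paper simply cites \cite[Lemma 1]{wocjan18}, which constructs exactly the unitary $U=\sum_{i\in[c]}\omega^i P_i$ you write down and establishes the same twirling identity $\frac{1}{c}\sum_{\ell\in[c]}U^\ell X (U^\dagger)^\ell=\sum_{i\in[c]}P_i X P_i$, after which both arguments conclude by conjugating with $P$ and invoking conditions~(\ref{eq:prop2}) and~(\ref{eq:prop3}). The only difference is that you carry out the construction and the roots-of-unity computation explicitly (including the needed check that the $P_i$ are pairwise orthogonal), making the argument self-contained rather than a citation.
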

\begin{proof}
We now make use of \cite[Lemma 1]{wocjan18} to construct a unitary matrix $U$ from the orthogonal projectors $(P_i : i \in [c])$ such that its powers $U^\ell$ satisfy 
\begin{equation*}
\frac{1}{c} \sum_{\ell\in[c]}U^\ell (X\otimes I_d)(U^\dagger)^\ell = \sum_{i\in[c]} P_i(X\otimes I_d)P_i
\end{equation*} 
for any matrix $X\in\C^{d\times d}$. The left hand side of this equation defines a so-called twirling of the matrix $X \otimes I_d$.
\end{proof}
We now have the tools to prove Theorem~\ref{thm:spectral}.  Note that we did not make use of condition~(\ref{eq:1}).

\subsection{Proof of the Lima  bound in Theorem~\ref{thm:spectral}}
\begin{proof}
The proof is almost identical to the proof for the chromatic number.  
We use the identity $D-Q=-A$. To abbreviate, we set $P=I_n\otimes \Upsilon=I_n\otimes y y^\dagger$. We have:
\begin{eqnarray*}
A \otimes y y^\dagger 
& = &
P ( A\otimes I_d ) P \\
& = &
\sum_{\ell=1}^{c-1} P U^\ell (-A\otimes I_d) (U^\dagger)^{\ell} P \\
& = & 
\sum_{\ell=1}^{c-1} P U^\ell \left( (D-Q)\otimes I_d \right) (U^\dagger)^{\ell} P \\
& = &
(c-1)(D\otimes y y^\dagger) - \sum_{\ell=1}^{c-1} P U^\ell (Q\otimes I_d) (U^\dagger)^\ell P  
\end{eqnarray*}
Define the column vector $v=\frac{1}{\sqrt{n}}(1,1,\ldots,1)^\dagger \otimes y$. Multiply the left and right most sides of the above matrix equation by $v^\dagger$ from the left and by $v$ from the right to obtain
\[
\frac{2m}{n} = v^\dagger (A\otimes y y^\dagger)v = (c-1)\frac{2m}{n} - \sum_{\ell=1}^{c-1} v^\dagger P U^\ell (Q \otimes I_d) (U^\dagger)^\ell P v \le (c-1)\frac{2m}{n} - (c-1)\delta_n\,.
\]
This uses that $v^\dagger (A\otimes y y^\dagger) v = v^\dagger(D\otimes y y^\dagger) v = 2m/n$, which is equal to the sum of all entries of respectively $A$ and $D$ divided by $n$ due to the special form of the vector $v$, and that $v^\dagger P U^\ell (Q\otimes I_d) (U^\dagger)^\ell P v = v^\dagger U^\ell (Q\otimes I_d) (U^\dagger)^\ell v  \ge \lambda_{\min}(Q) = \delta_n$. 
\end{proof}
\subsection{Proof of the Hoffman and Kolotilina bounds in Theorem~\ref{thm:spectral}}
\begin{proof}
Let $E\in\C^{n\times n}$ be an arbitrary diagonal matrix. Using (\ref{eq:twirling A}) and (\ref{eq:twirling diag}), we obtain
\[
\sum_{\ell=1}^{c-1} P U^\ell ( E \otimes I_d - A \otimes I_d ) (U^\dagger)^\ell P = (c-1) E \otimes y y^\dagger + A \otimes y y^\dagger\,.
\]

Using that $\lambda_{\max} (X) \ge \lambda_{\max}(PXP)$ and $\lambda_{\max}(X) + \lambda_{\max}(Y)\ge \lambda_{\max}(X+Y)$ for arbitrary Hermitian matrices $X$ and $Y$, we obtain
\begin{eqnarray*}
\lambda_{\max}(E - A)
& = & 
\lambda_{\max}(E \otimes I_d - A \otimes I_d) \\ 
& \ge &
\lambda_{\max} \left( E \otimes y y^\dagger + \frac{1}{c-1} A \otimes y y^\dagger \right) \\
& = &
\lambda_{\max} \left( E + \frac{1}{c-1} A \right)\,.
\end{eqnarray*}

\cite[Corollary~5]{unified_spectral_bounds} shows that the above eigenvalue bound implies
\[
\lambda_{\max}(E-A) \ge \lambda_{\max}(E+A) - \frac{c-2}{c-1}\lambda_{\max}(A)\,,
\]
or equivalently
\[
c \ge 1 + \frac{\lambda_{\max}(A)}{\lambda_{\max}(A) - \lambda_{\max}(E+A) + \lambda_{\max}(E-A)}\,,
\]
from which the Hoffman and  Kolotilina bounds are obtained by setting $E=0$ and $E= D$, respectively.
\end{proof}

\subsection{Inertial and generalized Hoffman and Kolotilina bounds}

We do not know whether the inertial bound in Theorem~\ref{thm:inertial} or the generalized (multi-eigenvalue) bounds in \cite{unified_spectral_bounds} are also lower bounds for the vectorial chromatic number.  The difficulty seems to be in determining what happens to the \emph{entire} spectrum of the various matrices when they are compressed by $P=I_n\otimes y y^\dagger$. The Kolotilina and Lima bounds only use  the maximal and/or minimal eigenvalues.

\section{Proof of the inertial lower bound on the projective rank $\xi_f(G)$}\label{sec:projective_rank}

We conjecture that for all graphs $G$ the projective rank $\xi_f(G)$ is  lower bounded by
\[
1 + \max\left(\frac{n^+}{n^-}, \frac{n^-}{n^+} \right) \le \xi_f(G)\,.
\]
Unfortunately, we are not able to settle this question by either providing a counterexample or proving  this bound for all graphs.  However, we are able to prove the weaker lower bound in Theorem~\ref{thm:weaker_inertial}.

We derive two lemmas to better organize the proof of Theorem~\ref{thm:weaker_inertial}. 

\begin{lem}\label{lem:simple_rank_bound}
Let $P$ be an orthogonal projector and $X$ a positive semidefinite matrix in $\C^{m\times m}$. Then, we have
\[
\rank(PXP) \ge \rank(P) - \mathrm{null}(X)\,.
\]
\end{lem}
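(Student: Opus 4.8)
The plan is to exploit that $X$ is positive semidefinite by passing to its unique PSD square root $X^{1/2}$, thereby rewriting $PXP$ as a Gram matrix whose rank is easy to control. Writing $P = P^\dagger$ and $X = X^{1/2} X^{1/2}$, we have $PXP = (X^{1/2} P)^\dagger (X^{1/2} P)$, and since $\rank(M^\dagger M) = \rank(M)$ for any matrix $M$, this gives $\rank(PXP) = \rank(X^{1/2} P)$. So it suffices to bound the rank of $X^{1/2} P$ from below.

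First I would observe that the range of $P$ is an $r$-dimensional subspace $\mathcal{V} = \mathrm{range}(P)$ with $r = \rank(P)$, and that on this subspace $X^{1/2} P$ acts simply as $X^{1/2}$, so the image of $X^{1/2} P$ equals $X^{1/2}(\mathcal{V})$. Applying rank--nullity to the linear map $X^{1/2}$ restricted to $\mathcal{V}$ yields
\[
\rank(X^{1/2}P) = \dim X^{1/2}(\mathcal{V}) = \dim \mathcal{V} - \dim\big(\mathcal{V} \cap \ker X^{1/2}\big).
\]
The remaining ingredient is the identity $\ker X^{1/2} = \ker X$, valid for PSD $X$ because $X^{1/2} v = 0$ if and only if $\|X^{1/2} v\|^2 = v^\dagger X v = 0$ if and only if $Xv = 0$. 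Hence $\dim(\mathcal{V} \cap \ker X^{1/2}) \le \dim \ker X = \mathrm{null}(X)$, and combining the displays gives $\rank(PXP) \ge r - \mathrm{null}(X) = \rank(P) - \mathrm{null}(X)$, as required.

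There is essentially no hard step here: the only thing to get right is the reduction $\rank(PXP) = \rank(X^{1/2}P)$, which is where positive semidefiniteness of $X$ is genuinely used---for indefinite $X$ no such clean bound is available. An alternative, even shorter route is to apply Sylvester's rank inequality to the product $X^{1/2} P$ directly: $\rank(X^{1/2} P) \ge \rank(X^{1/2}) + \rank(P) - m = \rank(X) + \rank(P) - m = \rank(P) - \mathrm{null}(X)$, using $\rank(X^{1/2}) = \rank(X)$ and $\mathrm{null}(X) = m - \rank(X)$. I would present the rank--nullity version as the main argument since it is self-contained, and note the Sylvester shortcut as an aside.
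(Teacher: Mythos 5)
Your proof is correct, and it takes a genuinely different (and more direct) route than the paper's. The paper first perturbs $X$: it picks a positive semidefinite $\Delta$ of rank $\mathrm{null}(X)$ (the projector onto $\ker X$) so that $Y=X+\Delta$ is a full-rank positive semidefinite matrix, then uses subadditivity of rank to get $\rank(PXP)\ge \rank(PYP)-\rank(\Delta)$, and only applies the Gram factorization $(Y^{1/2}P)^\dagger(Y^{1/2}P)$ to the \emph{full-rank} matrix $Y$, where $\rank(Y^{1/2}P)=\rank(P)$ is immediate because $Y^{1/2}$ is invertible. You instead apply the Gram factorization to $X$ itself, reducing the claim to $\rank(X^{1/2}P)\ge\rank(P)-\mathrm{null}(X)$, which you settle either by rank--nullity for $X^{1/2}$ restricted to $\mathrm{range}(P)$ together with $\ker X^{1/2}=\ker X$, or in one line by Sylvester's rank inequality. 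Your version avoids introducing the auxiliary matrix $\Delta$ entirely and makes it transparent exactly where the loss of $\mathrm{null}(X)$ comes from (the possible intersection of $\mathrm{range}(P)$ with $\ker X$); the paper's version trades that for needing only the crudest rank inequalities plus invertibility of $Y^{1/2}$. Both arguments rest on the same essential fact---the positive semidefinite square root turns $PXP$ (or $PYP$) into a Gram matrix---so the difference is in the bookkeeping, where yours is the leaner of the two. As a minor aside, the identity $\rank(PYP)=\rank(Y^{1/2}P)=\rank(P)$ that closes the paper's argument is typeset after the end of the proof environment in the source, so your self-contained write-up is also the cleaner one editorially.
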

\begin{proof}
There exist positive semidefinite matrices $Y$ and $\Delta$ such that $Y$ has full rank, $\Delta$ has rank $\mathrm{null}(X)$, and $X+\Delta=Y$. Using that $\rank(M+N)\le \rank(M) + \rank(N)$ for arbitrary matrices,
we obtain
\[
\rank(PXP) + \rank(P\Delta P) \ge \rank(PYP)\,.
\]
Using that $\rank(MN)\le\rank(M)$ for arbitrary matrices $M$ and $N$, we obtain
\[
\rank(PXP) \ge \rank(PYP) - \rank(\Delta)\,.
\]
We can write
\[
PYP = (Y^{1/2}P)^\dagger (Y^{1/2}P)\,.  
\]
\end{proof}
Using that $\rank(M^\dagger M)=\rank(M)$ for arbitrary matrices, we obtain 
\[
\rank(PYP) = \rank(Y^{1/2}P) = \rank(P)
\]
because $Y^{1/2}$ has full rank.

\begin{lem}\label{lem:annihilate}
Let $P_v$ be the projectors of a $(d/r)$-orthogonal representation. Define the block diagonal projector
\[
P = \sum_{v\in V} e_v e_v^\dagger \otimes P_v\,.
\]
Then, we have
\[
P(A\otimes I_d) P = 0_n \otimes 0_d
\]
and
\[
\rank(P) = n r\,.
\]
\end{lem}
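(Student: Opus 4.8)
The plan is to verify the two claimed identities by direct computation, exploiting the block-diagonal structure of $P$ exactly as in the proof of condition~(\ref{eq:prop2}) earlier in the paper. First I would expand the product $P(A\otimes I_d)P$ using $P=\sum_{v\in V} e_v e_v^\dagger \otimes P_v$ and $A=\sum_{v,w\in V} a_{vw}\, e_v e_w^\dagger$. The two outer sums collapse against the basis vectors: $e_v e_v^\dagger (e_v e_w^\dagger) e_w e_w^\dagger = e_v e_w^\dagger$, so the only surviving term for each ordered pair $(v,w)$ is $a_{vw}\, e_v e_w^\dagger \otimes P_v P_w$. Hence
\[
P(A\otimes I_d)P = \sum_{v,w\in V} a_{vw}\, e_v e_w^\dagger \otimes P_v P_w\,.
\]
Now the key point is that $a_{vw}\neq 0$ forces $vw\in E$, and the defining property of a $d/r$-representation is precisely that $P_v P_w = 0_d$ for all $vw\in E$. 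Every summand therefore vanishes, giving $P(A\otimes I_d)P = 0_n\otimes 0_d$, as required.

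For the rank computation I would use the fact that each $P_v$ is a rank-$r$ orthogonal projector, so $\mathrm{rank}(P_v)=r=\mathrm{tr}(P_v)$ for every $v\in V$. Since $P$ is block diagonal with the $n$ blocks $P_v$ placed along the diagonal (indexed by the $e_v e_v^\dagger$ factors), its rank is simply the sum of the block ranks,
\[
\rank(P) = \sum_{v\in V} \rank(P_v) = \sum_{v\in V} r = n r\,.
\]
Equivalently one can observe that $P$ is itself an orthogonal projector (the $\otimes P_v$ blocks are mutually orthogonal idempotents on disjoint coordinate subspaces), so $\rank(P)=\mathrm{tr}(P)=\sum_{v\in V}\mathrm{tr}(P_v)=nr$.

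I do not expect any genuine obstacle here: both claims are immediate consequences of the block structure together with the two defining properties of a $d/r$-representation. The only point requiring a moment of care is bookkeeping in the collapse of the double sum, namely confirming that the orthonormality $e_v^\dagger e_u = \delta_{vu}$ eliminates all cross terms so that the surviving expression really is indexed by ordered pairs $(v,w)$ with coefficient $a_{vw}\otimes P_vP_w$; once that is checked, the edge condition $P_vP_w=0$ finishes the first identity and the trace argument finishes the second.
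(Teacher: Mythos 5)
Your proof is correct and follows essentially the same route as the paper, which simply refers back to the analogous expansion used for condition~(\ref{eq:prop2}) and notes that the block-diagonal structure gives $\rank(P)=nr$; you have merely written out explicitly the collapse of the double sum and the trace/block-rank argument that the paper leaves implicit.
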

\begin{proof}
This follows directly from the orthogonality condition $P_v P_w = 0_d$ for all $vw\in E$. The proof is very similar to the proof for the vectorial chromatic number in the previous section.
The projectors $P_v$ have rank $r$ for all $v\in V$ so $\rank(P)=nr$.
\end{proof} 

We are now ready to prove Theorem~\ref{thm:weaker_inertial}.
\begin{proof}
Let $A=B-C$, defined as in Section~\ref{sec:orthogonal_rank}, so $\rank(B)=n^+$ and $\rank(C)=n^-$. Note that  Lemma~\ref{lem:annihilate} implies
\[
P(B\otimes I_d) P = P(C\otimes I_d) P\,.
\]
so that 
\begin{equation}\label{eq:equal}
P(B \otimes I_d)P = \frac{1}{2} P((B+C) \otimes I_d)P\,. 
\end{equation}

Clearly, the rank of the left hand side of (\ref{eq:equal}) is bounded from above by $n^+ d = \rank(B\otimes I_d)$. 

We now bound the rank of the right hand side of (\ref{eq:equal}) from below. Observe that $B+C=|A|$, where $|A|=\sum_{i=1}^n |\mu_i| e_i e_i^\dagger$  and $\mu_i$ and $e_i$ are the eigenvalues and eigenvectors of $A$, respectively.  Clearly, $|A|$ is positive semidefinite, its rank is equal to $\rank(A)=n^+ +n^-$ and its nullity is equal to $\mathrm{null}(A)=n_0$.
Therefore, $|A|\otimes I_d$ is positive semidefinite, its rank is equal to $(n^+ + n^-)d$ and its nullity is equal to $n_0 d$.  We can now apply Lemma~\ref{lem:simple_rank_bound} to obtain
\[
\rank\Big(P \big( |A| \otimes I_d \big) P\Big) \ge \rank(P) - n_0 d = nr - n_0 d\,.
\]
Combining the upper and lower bounds on the ranks, we obtain
\[
n^+ d \ge nr - n_0 d \quad \Longleftrightarrow \quad \frac{d}{r} \ge 1 + \frac{n^-}{n^+ + n^0}\,.
\]
The result
\[
\frac{d}{r} \ge 1 + \frac{n^+}{n^- + n^0}
\]
is obtained by considering $P(C \otimes I_d)P$ on the left hand side of (\ref{eq:equal}). 
\end{proof}

\section{Implications for the projective rank}

The following examples demonstrate that the inertial bound is exact for $\xi_f$  for various classes of graphs. We also use Theorem~\ref{thm:weaker_inertial} to derive the value of $\xi_f$ for some graphs.

\begin{itemize}
\item For \emph{odd cycles}, $C_{2k+1}$ (see \cite{elphick17} and \cite{mancinska16}):
\[
1 + \max\left(\frac{n^+}{n^-} , \frac{n^-}{n^+}\right) = \chi_f = \xi_f = 2 + \frac{1}{k}  ; \mbox{       but  } \chi_{vect} = \chi_q = \xi = \chi = 3.
\]

\item For \emph{Kneser} graphs, $K_{p,k}$ (see \cite{elphick17}, \cite{godsil16}, \cite{paulsen16}, and \cite{haviv18}):
\[
1 + \max\left(\frac{n^+}{n^-} , \frac{n^-}{n^+}\right) = \chi_v = \chi_f = \xi_f = \frac{p}{k}; 
\chi_{vect} = \left\lceil\frac{p}{k}\right\rceil; \mbox{  but  } \xi = \chi = p - 2k + 2. 
\]

\item The \emph{orthogonality} graph, $\Omega(n)$, has vertex set the set of $\pm1-$vectors of length $n$, with two vertices adjacent if they are orthogonal. With $n$ a multiple of 4 (see  \cite[Lemma 4.2 and Theorem 6.4]{mancinska16}):
\[
\chi_{sv} = \chi_{vect} = \xi_f =  \xi = \chi_q = n;  \mbox{   but  } \chi_f \mbox{ and } \chi \mbox{ is exponential in } n.
\]
$\Omega(4)$ has spectrum $(6^2, 0^8, -2^6)$ so when $n = 4$:
\[
1 + \max\left(\frac{n^+}{n^-} , \frac{n^-}{n^+}\right) = 1 + \frac{n^-}{n^+} = 4 = \xi_f = \chi_q,
\]
but for $n > 4$ this  inertial bound is less than $\xi_f$.

\item The \emph{Andr\'asfai} graphs, And(k), are $k$-regular with $(3k-1)$ vertices. It is known (\cite{godsil13} and \cite{godsil06}) that
\[
1 + \max\left(\frac{n^+}{n^-} , \frac{n^-}{n^+}\right) = 1 + \frac{n^+}{n^-} = 1 + \frac{2k - 1}{k} = 3 - \frac{1}{k} = \chi_f 
\]
but 
\[
\chi = \chi_q = \xi = 3.
\]
The Andr\'asfai graphs are non-singular, so using Theorem~\ref{thm:weaker_inertial} and that $\xi_f \le \chi_f$ it follows that $\xi_f = 3 - 1/k$.

\item The \emph{Clebsch} graph on 16 vertices has spectrum $(5^1, 1^{10}, -3^5)$ and $\chi_f = 3.2$ (see \cite{godsil06}). Therefore
\[
1 + \max\left(\frac{n^+}{n^-} , \frac{n^-}{n^+}\right) = 1 + \frac{n^+}{n^-} = 3.2 = \chi_f; \mbox{ but } \chi = \xi = 4.
\]
The Clebsch graph is non-singular, so using Theorem~\ref{thm:weaker_inertial} and that $\xi_f \le \chi_f$, it follows that $\xi_f = 3.2$.

The Clebsch graph is the folded $5$-cube. The folded $7$-cube on $64$ vertices has spectrum $7^1$, $3^{21}$, $-1^{35}$, $-5^7$, so $\chi_f$ for the folded $7$-cube is greater than or equal to $32/11$.
\end{itemize}

More generally, if the inertial bound is exact for the fractional chromatic number of a non-singular graph, then it is also exact for the projective rank. Vertex transitive graphs have $\xi_f \le \chi_f = n/\alpha$, so if a non-singular vertex transitive graph has 

\[
\alpha = \min{(n^+ , n^-)},  \mbox{  then  } \xi_f = \chi_f = \frac{n}{\alpha}.
\]

\section{Conclusion}

We have proved that many lower bounds for $\chi(G)$ are also lower bounds for $\xi(G)$. We have also proved that for  non-singular graphs

\[
1 + \max\left(\frac{n^+}{n^-} , \frac{n^-}{n^+}\right) \le \xi_f(G).
\]
Elphick and Wocjan \cite{elphick17}  proved this lower bound for $\chi_f$ for  non-singular graphs, using a simpler proof technique. 
 
Costello \emph{et al} \cite{costello06} proved that almost all (random) graphs with no isolated vertices are non-singular. This provides limited support for our conjecture that the inertial lower bound for $\xi(G)$ is also a lower bound for $\xi_f(G)$ and consequently for $\chi_f(G)$. 
\section*{Acknowledgements}
This research has been supported in part by NSF Award 1525943.

\end{document}